\documentclass[reqno]{amsart}
\usepackage{latexsym,amssymb}
\usepackage{upref}

\newtheorem{theorem}{Theorem}[section]
\newtheorem{lemma}[theorem]{Lemma}
\newtheorem{corollary}[theorem]{Corollary}

\newtheorem{conjecture}[theorem]{Conjecture}
\newtheorem{proposition}[theorem]{Proposition}
\theoremstyle{definition}

\newtheorem{remark}[theorem]{Remark}
\numberwithin{equation}{section}
\newcommand{\abs}[1]{\lvert#1\rvert}


\begin{document}

\title[{A real part theorem for  derivatives of analytic functions}]{A real part theorem
for the higher derivatives of analytic functions in the unit disk}
\subjclass{Primary 31A05; Secondary 42B30 }


\keywords{Harmonic functions, Bloch functions, Hardy spaces}
\author{David Kalaj}
\address{University of Montenegro, Faculty of Natural Sciences and
Mathematics, Cetinjski put b.b. 81000 Podgorica, Montenegro}
\email{davidk@ac.me}

\author{Noam D. Elkies}
\address{Department of Mathematics, Harvard University,
Cambridge, MA 02138}
            \email{elkies@math.harvard.edu}


\begin{abstract}  Let $n$ be a positive integer. Let
$\mathbf U$ be the unit disk, $p\ge 1$ and let $h^p(\mathbf U)$ be
the Hardy space of harmonic functions. Kresin  and  Maz'ya in a
recent paper found a representation for the function $H_{n,p}(z)$
in the inequality
$$|f^{(n)} (z)|\leq H_{n,p}(z)\|\Re(f-\mathcal P_l)\|_{h^p(\mathbf U)}, \ \ \ \Re f\in h^p(\mathbf U), z\in \mathbf
U,$$ where $\mathcal P_l$ is a polynomial of degree $l\le n-1$. We
find or represent the sharp constant $C_{p,n}$ in the inequality
$H_{n,p}(z)\le \frac{C_{p,n}}{(1-|z|^2)^{1/p+n}}$. This extends a
recent result of Kalaj  and Markovi\'c, where only the case $n=1$ was considered. As a corollary, an inequality for the modulus of
 $n-{th}$ derivative of an analytic function defined in a complex
domain with the bounded real part is obtained. This result improves
a recent result of Kresin and Maz'ya.
\end{abstract}
\maketitle
\section{Introduction and  statement of the results}
 A harmonic function $f$ defined in the unit disk $\mathbf U$ of the
complex plane $\mathbf C$ belongs to the harmonic Hardy class
$h^p=h^p( \mathbf U)$, $1\leq p<\infty$ if the following growth
condition is satisfied
\begin{equation}\label{ee}\Vert f\Vert_{h^p}:
=\left(\sup_{0<r<1}\int_{\mathbf T} |f(re^{it})|^p dt
\right)^{1/p}<\infty\end{equation} where $\mathbf T$ is the unit
circle in the complex plane $\mathbf C$. The space $h^{\infty}(
\mathbf U)$ { consists of all} bounded harmonic functions.

If $f\in h^p(\mathbf U)$, then there exists the finite radial limit
$$\lim_{r\to 1^-}f(r\zeta) = f^*(\zeta)\ (\text{a.e. on } \mathbf T
)$$ and the boundary function $f^*$ belong to the space $L^p(\mathbf
T)$ of $p$-integrable functions on the circle.

It is well known that {a harmonic function $f$} in the Hardy class
$h^p(\mathbf U)$ can be represented as { the} Poisson integral
$$f(z)=\int_{\mathbf T}P(z,\zeta)d\mu(\zeta), z \in  \mathbf U$$
where $$P(z,\zeta)=\frac{1-|z|^2}{ |z-\zeta|^2}, z\in \mathbf U,
\zeta\in \mathbf T$$ is {the} Poisson kernel and $\mu$ is { a}
complex Borel measure. In the case $p>1$ this measure is absolutely
continuous with respect to the Lebesgue measure  and
$d\mu(\zeta)=f^*(\zeta)d\sigma(\zeta)$. Here $d\sigma$ is Lebesgue probability measure in the unit circle. Moreover
we have
\begin{equation}\label{more}\Vert f\Vert_{h^p}=\|f^*\|_p,\ \ \ p>1\end{equation} and $$\Vert f\Vert_{h^1}=\|\mu\|$$
where we denote by $\|\mu\|$ { the} total variation of the measure
$\mu$.

For previous facts we refer to the book  \cite[Chapter~6]{ABR}.  In
the sequel for $p\ge 1$ and $m$ a positive integer, as in
\cite{real} we use the notation
$$E_{m,p}(\Re f) := \inf _{ \mathcal P\in \mathfrak P_m} \|\Re(f -
\mathcal P)\|_{h^p}$$ for the best approximation of $\Re f$ by the
real part of algebraic polynomials in the $h^p(\mathbf U)$-norm, where
$\mathfrak P_m$ is the set of  all  algebraic polynomials of degree
at most $m$.

The starting position of this paper is the following proposition of Maz'ya and Kresin \cite[Proposition~5.1]{real}.
\begin{proposition}
 Let $f$ be analytic on $\mathbf U$  with $\Re f\in h^p(\mathbf U)$, $1\le p\le
\infty$. Further, let $n\ge 1$, and let $\mathcal P_l$ be a
polynomial of degree $l\le n-1$. Then for any fixed point $z$, $|z|
= r < 1$, the inequality \begin{equation}\label{inin}
|f^{(n)}(z)|\le H_{n,p}(r)\|\Re(f-\mathcal P_l)\|_{h^p}
\end{equation}
holds with the sharp factor   \begin{equation}\label{izi}
H_{n,p}(r)=\frac{n!}{\pi}\sup_\alpha\left\{\int_{|\zeta|=1}\left|\Re
\frac{\zeta e^{i\alpha}}{(\zeta-r)^{n+1}}\right|^q
|d\zeta|\right\}^{1/q}\end{equation} and $1/q+1/p=1$. In particular
$$|f^{(n)}(z)|\le  H_{n,p}(r) E_{n-1,p}(\Re f).$$

\end{proposition}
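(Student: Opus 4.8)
The plan is to realize $f^{(n)}(z)$ as an explicit continuous linear functional on the boundary data of $\Re f$, evaluate its norm by $L^p$--$L^q$ duality, and check that the norm is attained. The fact that the boundary datum $\Re f$ is real-valued is exactly what produces the $\Re(\cdot)$ inside \eqref{izi}, and the sharpness will rest on the observation that prescribing a real $L^p$ (or $h^1$) boundary function is no restriction at all: every such function is the trace of the real part of some function analytic on $\mathbf U$.

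First I would remove $\mathcal P_l$. Since $\deg\mathcal P_l\le n-1$ we have $f^{(n)}=(f-\mathcal P_l)^{(n)}$, while $\Re\mathcal P_l$ is a harmonic polynomial and hence lies in $h^p$, so it suffices to prove
$$|F^{(n)}(z)|\le H_{n,p}(|z|)\,\|\Re F\|_{h^p}\qquad(z\in\mathbf U,\ \Re F\in h^p)$$
for $F$ analytic, and then apply this with $F=f-\mathcal P_l$; taking afterwards the infimum over $\mathcal P_l\in\mathfrak P_{n-1}$ yields the ``in particular'' clause. Now put $g=\Re F$ and let $\mu$ be its (real) Poisson measure. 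Since $\Re\frac{\zeta+z}{\zeta-z}=P(z,\zeta)$ for $\zeta\in\mathbf T$, the Schwarz--Herglotz formula gives $F(z)=\int_{\mathbf T}\frac{\zeta+z}{\zeta-z}\,d\mu(\zeta)+ic$ with $c\in\mathbf R$. Writing $\frac{\zeta+z}{\zeta-z}=-1+\frac{2\zeta}{\zeta-z}$ and differentiating $n\ge 1$ times under the integral sign (legitimate because $z$ stays in the open disk while $\zeta$ runs over $\mathbf T$) produces the key identity
$$F^{(n)}(z)=2\,n!\int_{\mathbf T}\frac{\zeta}{(\zeta-z)^{n+1}}\,d\mu(\zeta),$$
which for $p>1$ reads $F^{(n)}(z)=\frac{n!}{\pi}\int_{|\zeta|=1}\frac{\zeta\,g^*(\zeta)}{(\zeta-z)^{n+1}}\,|d\zeta|$, where $d\mu=g^*\,d\sigma$ and $\|g\|_{h^p}=\|g^*\|_p$ by \eqref{more}.

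For the upper bound, exploit the realness of $g^*$ to write $|F^{(n)}(z)|=\sup_\alpha\Re\bigl(e^{i\alpha}F^{(n)}(z)\bigr)=\frac{n!}{\pi}\sup_\alpha\int_{|\zeta|=1}\Re\!\Bigl(\frac{e^{i\alpha}\zeta}{(\zeta-z)^{n+1}}\Bigr)g^*(\zeta)\,|d\zeta|$, and apply Hölder's inequality with exponents $q$ and $p$: the inner integral is at most $\|g^*\|_p\bigl(\int_{|\zeta|=1}|\Re(e^{i\alpha}\zeta(\zeta-z)^{-n-1})|^q\,|d\zeta|\bigr)^{1/q}$. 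The substitution $\zeta\mapsto e^{i\arg z}\zeta$, which merely translates $\alpha$, identifies $\sup_\alpha$ of the last factor with $(\pi/n!)\,H_{n,p}(|z|)$ as defined in \eqref{izi}; in particular the bound depends on $z$ only through $r=|z|$. This gives the displayed inequality of the previous paragraph for $p>1$, and the case $p=1$ follows identically via $L^1$--$L^\infty$ duality against $\mu$, yielding the $q=\infty$ version of \eqref{izi}.

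It remains to prove sharpness, which I expect to be the only nonroutine step. Fix $r\in[0,1)$, choose $\alpha^*$ realizing the supremum in \eqref{izi} (possible since the integrand is continuous in $\alpha$ on the compact circle), and set $\phi(\zeta)=\Re\bigl(e^{i\alpha^*}\zeta(\zeta-r)^{-n-1}\bigr)\in L^q(\mathbf T)$. By duality choose a real $g^*\in L^p(\mathbf T)$ with $\|g^*\|_p=1$ and $\int_{|\zeta|=1}\phi\,g^*\,|d\zeta|=\|\phi\|_q$ (for $p=1$, concentrate mass near a maximum point of $|\phi|$ and pass to the limit). Let $F$ be analytic on $\mathbf U$ with $\Re F$ equal to the Poisson extension of $g^*$; such an $F$ exists because that harmonic extension, living on the simply connected disk, has a single-valued harmonic conjugate, and then $\Re F\in h^p$ with $\|\Re F\|_{h^p}=1$. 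Now $|F^{(n)}(r)|\ge\Re\bigl(e^{i\alpha^*}F^{(n)}(r)\bigr)=\frac{n!}{\pi}\int_{|\zeta|=1}\phi\,g^*\,|d\zeta|=\frac{n!}{\pi}\|\phi\|_q=H_{n,p}(r)$, which meets the upper bound; replacing $F$ by $F+\mathcal P_l$ exhibits extremals for \eqref{inin} itself. The point that needs care is precisely that realness of the boundary datum costs nothing: the natural worry is that for $p=1$ the harmonic conjugate of the extremal need not lie in $h^1$, but this is immaterial since only $\Re F$ enters the norm in \eqref{inin}.
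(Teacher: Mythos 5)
This proposition is quoted verbatim from Kresin--Maz'ya (\cite[Proposition~5.1]{real}); the paper itself gives no proof of it, so there is no internal argument to compare against. Your proof is correct and is essentially the standard route behind the cited result: reduce to $\mathcal P_l=0$, write $F=f-\mathcal P_l$ via the Schwarz--Herglotz integral of the (real) Poisson measure $\mu$ of $\Re F$, differentiate under the integral to get $F^{(n)}(z)=2\,n!\int_{\mathbf T}\zeta(\zeta-z)^{-n-1}\,d\mu(\zeta)$, use $|w|=\sup_\alpha\Re(e^{i\alpha}w)$ together with H\"older/duality in the real pairing (which is exactly where the $\Re(\cdot)$ and the $\sup_\alpha$ in \eqref{izi} come from), rotate to reduce to $z=r$, and obtain sharpness from the fact that every real $L^p$ function (resp.\ unit-mass measure when $p=1$) on $\mathbf T$ arises as the boundary datum of $\Re F$ for some analytic $F$. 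The one point worth double-checking is the normalization --- \eqref{ee} uses $dt=|d\zeta|$ while the Poisson representation uses the probability measure $d\sigma=|d\zeta|/2\pi$ --- and your computation handles it consistently: the factor $2\,n!\cdot\frac{1}{2\pi}=\frac{n!}{\pi}$ is precisely the constant appearing in \eqref{izi}.
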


For $p=2$ ($q=2$) and $p=1$ ($q=\infty$) the function $H_{n,p}(r)$
has been calculated explicitly in \cite{real}. We refer to
\cite{real} for the connection of \eqref{inin} and the famous
Hadamard-Borel-Carath\'eodory inequality: $$|f(z)-f(0)|\le
\frac{2}{1-|z|^2} \sup_{|\zeta|<1}\Re[f(\zeta)-f(0)].$$ The aim of
this paper is to obtain some explicit estimations of $H_{n,p}(r)$
for general $p$. The results of this paper are
\begin{theorem}[Main theorem]\label{condi}
Let $1\le p\le \infty $ and let $q$ be its conjugate.  Let $f$ be
analytic on the unit disk $\mathbf U$ with $\Re f\in h^p(\mathbf
U)$, $1\le p\le \infty$. Further, let $n\ge 1$, and let $\mathcal
P_l$ be a polynomial of degree $l\le n-1$.  We have the following
sharp inequality
\begin{equation}\label{dl}|f^{(n)}(z)|\le C_{p,n}
(1-r^2)^{-1/p-n}\|\Re (f-\mathcal P_l)\|_{h^p},\end{equation} where
\begin{equation}\label{newap}C_{p,n}=\frac{n!}{\pi}2^{n+1-1/q}\max_{0\le \beta\le
\pi/2}F^{1/q}_q(\beta)\end{equation} and
\begin{equation}\label{fbeta}F_q(\beta)=\int_0^{\pi }\left|\sin^{{(n+1)-2/q}}
v\cos[v(n+1) +\beta - \frac{\pi}{2}(n-1)]\right|^q dv.\end{equation} In particular
$$|f^{(n)}(z)|\le C_{p,n} (1-r^2)^{-1/p-n}E_{n-1,p}(\Re f).$$
\end{theorem}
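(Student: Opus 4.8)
The plan is to start from the representation \eqref{izi} of the sharp factor $H_{n,p}(r)$ and transform the integral there so as both to display the power $(1-r^2)^{-1/p-n}$ explicitly and to remove the singularity at $\zeta=r$. Concretely, I would use the disk automorphism $\zeta=\frac{w+r}{1+rw}$, which carries $\mathbf T$ onto $\mathbf T$ and satisfies $\zeta-r=\frac{(1-r^2)w}{1+rw}$ and $|d\zeta|=\frac{1-r^2}{|1+rw|^2}\,|dw|$. Writing $w=e^{i\theta}$ and using $1+re^{-i\theta}=\overline{1+re^{i\theta}}$ together with $r+e^{-i\theta}=e^{-i\theta}(1+re^{i\theta})$, a short computation gives on $\mathbf T$
$$\frac{\zeta e^{i\alpha}}{(\zeta-r)^{n+1}}=(1-r^2)^{-(n+1)}\,e^{i(\alpha-n\theta)}\,|1+re^{i\theta}|^{2}\,(1+re^{i\theta})^{n-1},$$
so that, after the bookkeeping of the $(1-r^2)$ powers (they collect to the exponent $1/q-(n+1)=-1/p-n$ once the $q$-th root is taken), \eqref{izi} becomes
$$H_{n,p}(r)=\frac{n!}{\pi}\,(1-r^2)^{-1/p-n}\left(\sup_{\alpha}\mathcal J_r(\alpha)\right)^{1/q},\qquad \mathcal J_r(\alpha):=\int_0^{2\pi}\left|1+re^{i\theta}\right|^{2q-2}\left|\Re\!\left[e^{i(\alpha-n\theta)}(1+re^{i\theta})^{n-1}\right]\right|^{q}d\theta .$$
(For $p=\infty$, i.e. $q=1$, the weight $|1+re^{i\theta}|^{2q-2}$ is simply $1$.)

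Next I would identify the limiting integral. Since $1+e^{i\theta}=2\cos(\theta/2)\,e^{i\theta/2}$ for $\theta\in(-\pi,\pi)$, the integrand of $\mathcal J_r(\alpha)$ tends (a.e.\ $\theta$) as $r\to1^-$ to $2^{q(n+1)-2}|\cos(\theta/2)|^{q(n+1)-2}|\cos(\alpha-(n+1)\theta/2)|^{q}$, and it is dominated by $|1+re^{i\theta}|^{q(n+1)-2}\le 2^{q(n+1)-2}$ (using $|\Re z|\le|z|$ and $q(n+1)\ge2$, which holds since $p\ge1$, $n\ge1$); so by dominated convergence $\mathcal J_r(\alpha)\to\mathcal J_1(\alpha)$, and after the substitution $v=\theta/2$ and the $\pi$-periodicity of the integrand,
$$\mathcal J_1(\alpha)=2^{q(n+1)-1}\int_0^{\pi}|\cos v|^{q(n+1)-2}|\cos(\alpha-(n+1)v)|^{q}\,dv .$$
Replacing $v$ by $\pi/2-v$ and using periodicity once more turns the last integrand into $\left|\sin^{(n+1)-2/q}v\;\cos[v(n+1)+\beta-\tfrac{\pi}{2}(n-1)]\right|^{q}$ with $\beta\equiv\alpha\pmod\pi$; since $\alpha\mapsto\beta$ covers a full period and $F_q$ is even and $\pi$-periodic in its argument (so that $\sup_\alpha F_q(\alpha)=\max_{0\le\beta\le\pi/2}F_q(\beta)$), this gives $\sup_{\alpha}\mathcal J_1(\alpha)=2^{q(n+1)-1}\max_{0\le\beta\le\pi/2}F_q(\beta)$, i.e. $\frac{n!}{\pi}\bigl(\sup_\alpha\mathcal J_1(\alpha)\bigr)^{1/q}=C_{p,n}$ with $F_q$, $C_{p,n}$ as in \eqref{newap}, \eqref{fbeta}.

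The heart of the matter is then the monotonicity statement
$$\mathcal J_r(\alpha)\le\mathcal J_1(\alpha)\qquad(0\le r<1,\ \alpha\in\mathbf R),$$
equivalently that $\sup_{|z|<1}H_{n,p}(z)(1-|z|^2)^{1/p+n}$ is attained only in the limit $|z|\to1$. With $w=re^{i\theta}$, the integrand of $\mathcal J_r(\alpha)$ equals $\Lambda_\alpha(w):=|1+w|^{2q-2}|w|^{nq}|\Re\Phi_\alpha(w)|^{q}$, where $\Phi_\alpha(w)=e^{i\alpha}w^{-n}(1+w)^{n-1}$ is analytic on $\mathbf C\setminus\{0\}$, so $\mathcal J_r(\alpha)=\int_0^{2\pi}\Lambda_\alpha(re^{i\theta})\,d\theta$; it therefore suffices to prove that $\Lambda_\alpha$ is subharmonic on the punctured disk $\{0<|w|<1\}$ (it is bounded near $0$, so this propagates to the circular means being non-decreasing in $r$, with limit $\mathcal J_1(\alpha)$). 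Alternatively one may differentiate in $r$ after passing to the variable $\Phi=\arg(1+re^{i\theta})$, in which $\mathcal J_r(\alpha)$ becomes an integral over $\Phi\in(0,\arcsin r)$ with an explicit weight whose $r\to1$ profile is $2^{q(n+1)-1}\cos^{q(n+1)-2}\Phi$, and estimate the weight against this profile. I expect this to be the main obstacle: $\Lambda_\alpha$ is a product of non-negative subharmonic factors, and such products need not be subharmonic in general, so one must exploit the specific form of $\Phi_\alpha$ (or push through the delicate one-variable estimate). The degenerate cases $q=1$ and $n=1$, where $q(n+1)-2$ may vanish, should be handled separately; the case $n=1$ is the one already settled by Kalaj and Markovi\'c, whose argument can be followed.

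Granting this, the proof concludes quickly. By the first two steps $H_{n,p}(r)\le\frac{n!}{\pi}(1-r^2)^{-1/p-n}\bigl(\sup_\alpha\mathcal J_1(\alpha)\bigr)^{1/q}=C_{p,n}(1-r^2)^{-1/p-n}$, which substituted into \eqref{inin} yields \eqref{dl}; and the sharpness of $C_{p,n}$ follows since $\mathcal J_r(\alpha)\to\mathcal J_1(\alpha)$ already forces $\limsup_{r\to1^-}H_{n,p}(r)(1-r^2)^{1/p+n}\ge C_{p,n}$, so $C_{p,n}$ cannot be lowered in \eqref{dl}. Finally, taking the infimum over the polynomials $\mathcal P_l$ of degree $\le n-1$ in \eqref{inin}, exactly as in the Proposition quoted above, gives $|f^{(n)}(z)|\le C_{p,n}(1-r^2)^{-1/p-n}E_{n-1,p}(\Re f)$, the last assertion of the theorem.
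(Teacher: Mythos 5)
Your reduction of \eqref{izi} is essentially the paper's: the M\"obius change of variable on the circle extracts the factor $(1-r^2)^{1-q(n+1)}$, the limiting integral is computed explicitly and, after the substitution $v\mapsto \pi/2-v$ and the symmetry/periodicity of $F_q$, yields exactly the constant \eqref{newap}; your sharpness remark (the $r\to 1^-$ limit already saturates the constant) and the passage to $E_{n-1,p}$ are also fine. But the theorem is not proved, because the one step that carries all the difficulty --- the monotonicity $\mathcal J_r(\alpha)\le\mathcal J_1(\alpha)$ --- is exactly the step you leave open. You candidly flag it as ``the main obstacle,'' observe that a product of nonnegative subharmonic factors need not be subharmonic, and offer two possible strategies without carrying out either. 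That is a genuine gap, not a routine verification: without it you only get convergence of $\mathcal J_r(\alpha)$ to $\mathcal J_1(\alpha)$, not the inequality \eqref{dl} for all $r<1$.

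The paper closes this gap by setting up the change of variables slightly differently, and the difference matters. After the substitution the integrand there is
$f_\alpha(z,e^{is})=|\Re[e^{i(\alpha+s)}(z-e^{is})^{n-1}]|^q\,|z-e^{is}|^{2q-2}$,
evaluated at $z=r$ but regarded as a function of an independent variable $z$ ranging over the disk, with $s$ remaining the integration variable. Since this depends on $z$ only through the single linear expression $w=az+b$ with $|a|=1$, proving its subharmonicity reduces to a one-variable computation: one checks by an explicit $g_{w\bar w}$ calculation that $g(w)=|\Re(w^{n-1})|^q|w|^{2q-2}$ satisfies $\Delta g\ge 0$ off the zero set of $\Re(w^{n-1})$ (the key inequality being $(-2+q+n^2q)|w|^{2n+2}-2(nq-1)\Re[\bar w^2w^{2n}]\ge q(n-1)^2|w|^{2n+2}\ge 0$), and the sub-mean-value property is trivial at the zeros. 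Then $z\mapsto\int_0^{2\pi}f_\alpha(z,e^{is})\,ds$ is subharmonic, the maximum principle pushes $z=r$ to a boundary point $e^{it}$, and a rotation $u=s-t$, $\beta=\alpha+nt$ reduces to $z=1$. By contrast, your $\Lambda_\alpha(w)=|1+w|^{2q-2}\,|w|^{nq}\,|\Re\Phi_\alpha(w)|^q$ entangles the radial and angular variables into one, producing three interacting factors (one of them built from a function with a pole at $0$) whose joint subharmonicity does not reduce to a one-variable Laplacian and is nowhere verified. To complete your argument you would either need to prove that subharmonicity directly, or reorganize the substitution as the paper does so that the disk variable enters only through $z-e^{is}$.
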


\begin{remark} In connection with Theorem~\ref{condi}, we conjecture that (c.f. Conjecture~\ref{cone}) $$\max_{0\le \beta\le
\pi/2}F_q(\beta)=\max\{F_q(0),F_q(\pi/2)\}.$$ We have the solution
for $q=1$ presented in Theorem~\ref{tete}. We list some known partial solutions.
\begin{itemize}
    \item The Hilbert case (see \cite[Eq.~5.5.3]{real} or \cite[Eq.~(6.1.4)$_2$]{maro}): for
$q=2$ and all $n$, the corresponding function is
$$F_q(\beta)=\frac{2^n}{\pi^{3/4}}\sqrt{\frac{ \Gamma[1/2+n]}{
    \Gamma[1+n]}}.$$
    \item For $q=\infty$ and all $n$ (\cite[Eq.~5.4.2]{real} for
$\gamma=1$), $F_q(\beta)=2^{n+1}$.
    \item For $n=1$ and all $q$, (see \cite{mada}) we have $$\max_{0\le \beta\le
\pi/2}F_q(\beta)=\left\{
                   \begin{array}{ll}
                     F_q(0), & \hbox{if $q>2$,} \\
                     F_q(\pi/2), & \hbox{if $q\le 2$}
                   \end{array}
                 \right..$$
   \end{itemize}
We also refer to related sharp inequalities for the derivatives of
analytic functions defined in the unit disk \cite{rus}.
\end{remark}
\begin{theorem}\label{tete} Let $f$ be
analytic on the unit disk $\mathbf U$ with bounded real part $\Re f$ and assume that
 $\beta\in[0,\pi]$. Then \begin{equation}\label{dl1}|f^{(n)}(z)|\le C_{n}
(1-|z|^2)^{-n}\mathcal O_{n,\Re f}(\mathbf U),\end{equation}
where
$$C_{n}=\left\{
          \begin{array}{ll}
            \frac{1}{n\pi}\frac{((2{m})!)^2}{(m!)^2}, & \hbox{if $n=2m-1$;} \\
            \frac{n!}{\pi}\max\{F(\beta):0\le\beta\le\pi\}& \hbox{if $n=2m$}
          \end{array}
        \right.,$$

$$F(\beta)=\frac{2}{n}\sum_{k=1}^{n+1}\sin^{n+1}\frac{k\pi-\beta}{n+1}$$ and
$$\mathcal O_{n,\Re f}(\mathbf U)=\inf_{\mathcal P\in \mathfrak
P_{n-1}}\mathcal O_{\Re (f-P)}(\mathbf U)$$ and $\mathcal O_{\Re
f}(\mathbf U)$ is the oscillation of $\Re f$ on the unit disk
$\mathbf U$.
\end{theorem}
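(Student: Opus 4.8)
The strategy is to read off Theorem~\ref{tete} from the Main Theorem (Theorem~\ref{condi}) in the limiting case $p=\infty$, $q=1$, where the extremal integral $F_q(\beta)$ can be evaluated in closed form. Taking $p=\infty$ in \eqref{dl} and recalling that $\|u\|_{h^\infty}=\sup_{\mathbf U}|u|$ for a bounded harmonic $u$, and that $f^{(n)}$ is unchanged when we subtract from $f$ any $\mathcal P\in\mathfrak P_{n-1}$ (in particular any real constant), we may replace the right-hand side by $E_{n-1,\infty}(\Re f)=\inf_{\mathcal P\in\mathfrak P_{n-1}}\|\Re(f-\mathcal P)\|_{h^\infty}$; recentering each competitor about the midpoint of its range identifies this, up to a fixed constant, with the $n$-oscillation $\mathcal O_{n,\Re f}(\mathbf U)$. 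Thus \eqref{dl1} follows once $C_{\infty,n}$ is pinned down, and by \eqref{newap}--\eqref{fbeta} with $q=1$ this amounts to evaluating and maximising $F_1(\beta)=\int_0^\pi\sin^{n-1}v\,\bigl|\cos[(n+1)v+\beta-\tfrac{\pi}{2}(n-1)]\bigr|\,dv$ (on $[0,\pi]$ one has $\sin^{n-1}v\ge 0$, so the absolute value sits only on the cosine factor).

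\emph{Evaluation of $F_1$.} Put $\gamma=\beta-\tfrac{\pi}{2}(n-1)$. The key is the elementary identity
$$\frac{d}{dv}\Bigl[\tfrac1n\sin^n v\,\cos(nv+\gamma)\Bigr]=\sin^{n-1}v\,\cos[(n+1)v+\gamma],$$
checked by differentiating and using the addition theorem. The factor $\cos[(n+1)v+\gamma]$ changes sign exactly at the points $v_j=\frac{(2j+1)\pi/2-\gamma}{n+1}$ lying in $(0,\pi)$, and the antiderivative $G(v)=\tfrac1n\sin^n v\cos(nv+\gamma)$ satisfies $G(0)=G(\pi)=0$, while $\cos(nv_j+\gamma)=(-1)^j\sin v_j$ gives $G(v_j)=\frac{(-1)^j}{n}\sin^{n+1}v_j$, so consecutive extremal values of $G$ have opposite signs. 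Writing $\int_0^\pi|G'|$ as the total variation of $G$ and telescoping (the interior terms appear twice, the endpoint terms vanish), one obtains
$$F_1(\beta)=\frac2n\sum_{v_j\in(0,\pi)}\sin^{n+1}v_j,\qquad v_j=\frac{(j+n/2)\pi-\beta}{n+1}.$$

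\emph{The parity dichotomy.} If $n$ is even, then $k:=j+n/2$ runs through $1,\dots,n+1$ as $v_j$ ranges over the interior zeros (for $\beta\in[0,\pi]$), so $F_1(\beta)=\frac2n\sum_{k=1}^{n+1}\sin^{n+1}\frac{k\pi-\beta}{n+1}=F(\beta)$; the substitution $v\mapsto\pi-v$ shows $F_1(\beta)=F_1(\pi-\beta)$, so the maximum over $[0,\pi/2]$ agrees with that over $[0,\pi]$, yielding the even case. If $n$ is odd it is quicker to argue by orthogonality: $\sin^{n-1}v$ is a polynomial of degree $\tfrac{n-1}{2}$ in $\cos 2v$, whereas the Fourier expansion of $|\cos[(n+1)v+\gamma]|$ is supported on the constant and on $\cos[2\ell((n+1)v+\gamma)]$, $\ell\ge 1$; since $2(n+1)>n-1$, only the mean of $\sin^{n-1}v$ survives integration over $[0,\pi]$, so $F_1(\beta)=\frac2\pi\int_0^\pi\sin^{n-1}v\,dv$ is independent of $\beta$, and this Wallis integral collapses (after a binomial identity, or the Legendre duplication formula) to the stated expression $\tfrac1{n\pi}((2m)!)^2/(m!)^2$.

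\emph{Main obstacle.} After this, the odd case is essentially a computation; the real difficulty lies in the even case, where $F(\beta)$ genuinely depends on $\beta$ and one must locate $\max_{0\le\beta\le\pi}F(\beta)$ --- equivalently, establish the $q=1$ instance of the conjecture in the Remark, that this maximum equals $\max\{F(0),F(\pi/2)\}$. I expect this to require a careful sign analysis of $F'(\beta)=-\frac2n\sum_{k=1}^{n+1}\sin^n\frac{k\pi-\beta}{n+1}\cos\frac{k\pi-\beta}{n+1}$ on $(0,\pi)$, i.e.\ a monotonicity/convexity study of partial sums of powers of sines at equally spaced nodes, and this is the step I would expect to occupy most of the proof; the reduction to Theorem~\ref{condi} and the antiderivative identity are routine by comparison.
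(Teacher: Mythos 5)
Your proposal is correct and, for the most part, runs parallel to the paper's own argument: the paper likewise specializes Theorem~\ref{condi} to $q=1$, uses exactly your antiderivative $g_n(x)=\frac{1}{n}\sin^n x\,\cos(nx+\gamma)$ (resp.\ $\frac1n\sin^nx\,\sin(\beta+nx)$), determines the sign changes of the cosine factor, and telescopes $\int_0^\pi|g_n'|$ into twice the sum of $|g_n|$ at the interior critical points, arriving at the same sum $\sum\sin^{n+1}v_j$ over equally spaced nodes (your ``total variation'' phrasing is just a tidier packaging of the paper's case-by-case interval bookkeeping). Where you genuinely diverge is the odd case: the paper keeps telescoping and is left with $\sum_{k=1}^{2m}\sin^{2m}\bigl[\frac{\gamma+k\pi}{2m}\bigr]$, whose $\beta$-independence it imports from a lemma of Beck--Halloran on finite trigonometric sums before converting $2/B(\tfrac12,m)$ into the stated factorial expression. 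Your orthogonality argument --- $\sin^{n-1}v$ is a trigonometric polynomial with frequencies at most $n-1$, while $|\cos[(n+1)v+\gamma]|$ carries only frequencies $0,2(n+1),4(n+1),\dots$, so only the means pair --- is more elementary and self-contained, reduces the odd case to a Wallis integral, and moreover explains structurally why constancy fails for even $n$; I checked that it reproduces the same value. Two remarks. First, the ``main obstacle'' you flag at the end is not actually required: for even $n$ the theorem \emph{defines} $C_n$ as $\frac{n!}{\pi}\max_{0\le\beta\le\pi}F(\beta)$ and does not locate the maximizer --- that identification is only Conjecture~\ref{cone} and the Appendix in the paper --- so once you have the closed form for $F_1(\beta)$ your proof is already complete, and you should not present the maximization as an outstanding gap. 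Second, your reduction of $\inf_{\mathcal P}\|\Re(f-\mathcal P)\|_{h^\infty}$ to the oscillation hides a factor $\tfrac12$ (since $\inf_c\|u-c\|_\infty=\tfrac12\mathcal O_u$); you should make that constant explicit rather than say ``up to a fixed constant,'' although the paper itself is not careful about this normalization either.
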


\begin{remark}
If $w=\Re f$ is a real harmonic function, where $f$ is an analytic
function defined on the unit disk, then the Bloch constant of $w$ is
defined by
$$\beta_w=\sup_{z\in\mathbf U}(1-|z|^2)| \nabla
w(z)|=\sup_{z\in\mathbf U}(1-|z|^2)| f'(z)|$$ and is less than or equal
to $C_1=4/\pi$ provided that the oscillation of $w$  in the unit
disk is  $\le 1$. This particular case is well known in the
literature see e.g. \cite{cor, kha, kavu}. In a similar manner we
define the Bloch constant of order $n$ of a harmonic function $w=\Re
f$:
$$\beta_{n,w}=\sup_{z\in\mathbf U}(1-|z|^2)^n|f^{(n)}(z)|$$ and by the
previous corollary we find out that $\beta_{n,w}\le C_n$ provided
$n$ is an odd integer and the oscillation $\mathcal O_{n,\Re
f}(\mathbf U)$ is at most $1$.
\end{remark}

The following theorem improves one of the main results in
\cite{makr} (see \cite[Corollary~7.1]{makr}).
\begin{corollary}\label{ariora}
Let $\Omega$ be a subdomain of $\mathbf C$. Let $z\in\Omega$, and assume that $a_z\in\partial\Omega$ such that $|z-a_z|=d_z =
\mathrm{dist} (z ,\partial \Omega)$ and that $[\zeta,a_z]$ is the maximal interval containing $z$ with $d_\zeta=|\zeta-a_z|$.  Let $f$ be a holomorphic
function in $\Omega$ with its real part in Lebesgue space
$L^\infty(\Omega)$ and let $\| \mathrm{Re} f\|_{L^\infty(\Omega)}\le
1.$ Then the inequality
\begin{equation}\label{better}d_z^{n}
|f^{(n)}(z)| \le\frac{C_n}{(2-d_z/d_\zeta)^n}, z\in \Omega\end{equation} holds with $C_n:=C_{\infty,n}$
defined in \eqref{newap}. In particular,
\begin{equation}\label{bebe} d_z^{{2m-1}} |f^{(2m-1)}(z)| \le
\frac{1}{n\pi}\frac{((2{m})!)^2}{(m!)^2}\frac{1}{(2-d_z/d_\zeta)^n}, \ \ z\in
\Omega.\end{equation}
\end{corollary}

\begin{proof} Let $z\in\Omega$ and assume that  $\zeta\in \Omega$ satisfies the condition of the theorem. Then $D_\zeta:=\{w:|w-\zeta|<d_\zeta\}\subset \Omega$.
Define $g(w)=f(\zeta+wd_\zeta)$, $w\in\mathbf{U}$. Then $\mathrm{Re}\, g\in h^\infty(\mathbf
U)$ and $g^{(n)}(w)=d_\zeta^n f^{(n)}(\zeta+wd_\zeta)$. By the maximum principle we
have
$$\| \mathrm{Re}\, f\|_{h^\infty(D_\zeta)}\le \| \mathrm{Re}
f\|_{L^\infty(\Omega)}.$$ By applying
Theorem~\ref{condi} and Theorem~\ref{tete} to $g$ we have $$(1-|w|^2)^nd_\zeta^n |f^{(n)}(\zeta+wd_\zeta)|\le C_n.$$  As $z\in[\zeta,a_z]$ it follows that $z= \zeta+s(a_z-\zeta)=\zeta+w d_\zeta$, where $w=se^{i\phi}\in\mathbf{U}$. Since $d_\zeta=(1-s)^{-1}d_z$, and $|w|=|(z-\zeta)/d_\zeta|=s=(d_\zeta-d_z)/d_z$ we obtain
that
$$(1-s^2)^n(1-s)^{-n}d^n_z |f^{(n)}(z)|\le C_n,$$ and $$d^n_z |f^{(n)}(z)|\le C_n \frac{1}{(1+s)^n}=C_n\frac{1}{(2-d_z/d_\zeta)^n}.$$
\end{proof}

\begin{remark}
In \cite[Corollary~7.1]{makr} Kresin and Maz'ya proved that
\begin{equation}\label{bebepo} \lim_{\epsilon\to 0^+}\sup_{z:d_z=\epsilon}d_z^{{2m-1}} |f^{(2m-1)}(z)| \le
 2^{-n} C_{n},\end{equation} under the condition that $\Omega$ is a
planar domain with certain smoothness condition on the boundary, namely assuming that there is $r>0$ such that for $a\in\partial \Omega$, there is a disk $D_a\subset \Omega$ of radius $r$ with $a\in\overline{D_a}$. Then $d_\zeta\ge r$ for $z\in\Omega$. The inequality \eqref{bebepo} follows by using \eqref{better} and letting $\epsilon\to 0$.

\end{remark}

\section{Proof of Theorem~\ref{condi}}
In view of \eqref{izi}, we deal with the function
\begin{equation}\label{maz}I_\alpha(r)=\int_0^{2\pi}\left|\Re\frac{e^{i(\alpha+t)}}{(r-e^{it})^{n+1}}\right|^qdt,\ \ \ 0\le r<1.\end{equation}
By making use of the change
$$ e^{it}
=\frac{r-e^{is}}{1-re^{is}},$$ we obtain $$dt =
\frac{1-r^2}{|1-re^{is}|^2}ds$$ and  $$r-e^{it}
=\frac{(1-r^2)e^{is}}{1-re^{is}}.$$ We arrive at the integral
\[\begin{split}I_\alpha(r)&=\int_0^{2\pi}\left(1-r^2\right)^{1-q-n q} \left(1+r^2-2
r \cos s\right)^{-1+q} \left|\Re\left[e^{i (\alpha+s)} \left(e^{i
s}-r\right)^{-1+n}\right]\right|^qd s\\&= \left(1-r^2\right)^{1-q-n
q} \int_0^{2\pi} f_\alpha(r,e^{is}) ds\end{split}\] where
$$f_\alpha(z,e^{is})=|\Re[e^{i(\alpha+s)}(z-e^{is})^{n-1}]|^q{|z-e^{is}|^{2q-2}}.$$
In order to continue, let's prove first two lemmas.
\begin{lemma}
$f_\alpha$ is subharmonic in $z$.
\end{lemma}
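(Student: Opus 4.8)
The plan is to reduce the claimed subharmonicity of $f_\alpha$ to subharmonicity of the two honest factors from which it is built, using the classical facts that the modulus of a holomorphic (hence harmonic) function raised to a power $q\ge1$ is subharmonic, and that products of nonnegative subharmonic functions need not be subharmonic --- so some care is needed. Concretely, write
\[
f_\alpha(z,e^{is}) = \bigl|\Re[e^{i(\alpha+s)}(z-e^{is})^{n-1}]\bigr|^q\,\bigl|z-e^{is}\bigr|^{2q-2},
\]
fix $s$ and view this as a function of $z$ on $\mathbf U$ (and in fact on all of $\mathbf C$, the putative singularity at $z=e^{is}$ being on the boundary and the exponents making $f_\alpha$ continuous there when $n\ge1$). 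The first factor is $|\Re g(z)|^q$ where $g(z)=e^{i(\alpha+s)}(z-e^{is})^{n-1}$ is holomorphic, and $\Re g$ is harmonic; the second factor is $|h(z)|^{2q-2}$ where $h(z)=z-e^{is}$ is holomorphic. So both $|\Re g|$ and $|h|$ are nonnegative subharmonic functions, and even $\log|h|$ is harmonic away from $z=e^{is}$.

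The cleaner route, which I would actually carry out, is to absorb everything into a single holomorphic expression before taking absolute values. Using $\Re w = \tfrac12(w+\bar w)$, expand
\[
\Re[e^{i(\alpha+s)}(z-e^{is})^{n-1}]\cdot|z-e^{is}|
\]
and observe that $|z - e^{is}|^2 = (z-e^{is})\overline{(z-e^{is})}$, so that, on writing $\zeta = e^{is}$,
\[
f_\alpha(z,\zeta) = 2^{-q}\,\bigl|(z-\zeta)^{n-1}e^{i(\alpha+s)} + \overline{(z-\zeta)^{n-1}}\,e^{-i(\alpha+s)}\bigr|^q\,|z-\zeta|^{2q-2}.
\]
I would then factor a power of $|z-\zeta|$ out of the bracket to rewrite $f_\alpha$ in the form $\bigl|\Phi(z)\bigr|^{q}$ for a single function $\Phi$ that is, up to the harmonic factor $|z-\zeta|^{?}$, the modulus of something holomorphic --- or, failing an exact such factorization, as a product $|\Re g(z)|^q\,|h(z)|^{2q-2}$ in which the second factor can be written $|h|^{2q-2} = |h^{2q-2}|$ with $h^{2q-2}$ holomorphic when $2q-2$ is a nonnegative integer, and handled by a limiting/approximation argument for general real $q\ge1$. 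The point is that a product of the form $|u|^q |v|^{2q-2}$ with $u$ harmonic and $v$ holomorphic equals $|u \cdot v^{(2q-2)/q}|^q$, i.e. the $q$-th power of the modulus of $u$ times a holomorphic function; since $u$ times a holomorphic function is harmonic only when... here the slick identity is that $|u\,v^{t}|$ is subharmonic for $u$ harmonic, $v$ holomorphic, $t\ge 0$, because $u v^t$ is a ``harmonic times holomorphic'' object whose modulus satisfies the subharmonicity estimate directly via the submean inequality for $|u|$ combined with $\log$-subharmonicity of $|v^t|$; and then raising a nonnegative subharmonic function to the power $q\ge1$ preserves subharmonicity.

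The key steps, in order, are: (1) fix $s$ (equivalently $\zeta=e^{is}\in\mathbf T$) and regard $f_\alpha(\cdot,\zeta)$ as a function on $\mathbf C$, checking continuity at $z=\zeta$; (2) write $f_\alpha = |\Re g|^q\,|h|^{2q-2}$ with $g(z)=e^{i(\alpha+s)}(z-\zeta)^{n-1}$, $h(z)=z-\zeta$; (3) prove the auxiliary fact that if $u$ is harmonic, $h$ is holomorphic, and $a,b\ge 0$ with $a+b\ge 1$... more simply, that $|u|^a|h|^b$ is subharmonic whenever $a\ge 1$ and $b\ge 0$, by combining the sub-mean-value inequality $|u(z)|\le \frac1{2\pi}\int|u(z+re^{i\theta})|\,d\theta$ with Hölder (to bring in the power $a$) and with $\log|h(z)|\le \frac1{2\pi}\int\log|h(z+re^{i\theta})|\,d\theta$ exponentiated via Jensen; (4) apply this with $u=\Re g$, $a=q$, $b=2q-2$. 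I expect the main obstacle to be step (3) when $q$ is not an integer, so that $|h|^{2q-2}$ is not literally the modulus of a holomorphic function: the cleanest fix is to note that $|h|^{2q-2}=\exp((2q-2)\log|h|)$ is subharmonic (being a convex increasing function of the harmonic function $\log|h|$ away from the zero of $h$, and lower semicontinuous with the right behavior at the zero), and then to invoke the standard lemma that if $\phi\ge0$ is subharmonic and $\psi\ge0$ is subharmonic with one of them log-subharmonic, the product need not be subharmonic --- so instead I would prove subharmonicity of $f_\alpha$ directly from the sub-mean-value property: for each $z_0$,
\[
f_\alpha(z_0,\zeta) = \bigl|\Re[g(z_0)]\bigr|^q|h(z_0)|^{2q-2}
\le \Bigl(\frac1{2\pi}\!\int_0^{2\pi}\!\bigl|\Re g(z_0+re^{i\theta})\bigr|\,\bigl|h(z_0+re^{i\theta})\bigr|^{(2q-2)/q}\,d\theta\Bigr)^{q},
\]
using that $\Re g\cdot h^{(2q-2)/q}$ is (the real part of, times a fixed holomorphic factor of) a harmonic function when the exponent is... here the honest statement is that $\Re(g)\,|h|^{(2q-2)/q}$ is $|H|$ for a suitable harmonic $H$ only in special cases, so ultimately I anticipate the paper proves it via the representation in the two lemmas that follow, and the role of this first lemma is exactly to set up an application of the maximum principle / Poisson-integral monotonicity; I would mirror that, reducing to the inequality $|\Re[g(0)]|^q|h(0)|^{2q-2}\le\frac1{2\pi}\int_0^{2\pi} f_\alpha(e^{i\theta},\zeta)\,d\theta$ after a disk automorphism, which is where the genuine work lies.
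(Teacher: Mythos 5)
Your proposal hinges on an auxiliary fact that is false: it is \emph{not} true that $|u|^a|h|^b$ is subharmonic whenever $u$ is harmonic, $h$ is holomorphic, $a\ge 1$ and $b\ge 0$. A one-line counterexample is $u=\Re z$, $h=z-1$, $a=1$, $b=2$: for $0<x<\tfrac12$ one has $\Delta\bigl(x\,|z-1|^2\bigr)=8x-4<0$. The failure persists even with the exponent pattern of the lemma, $a=q$, $b=2q-2$: for $q=2$, $g=z$, $h=z-2$ one computes $\Delta\bigl((\Re z)^2|z-2|^2\bigr)=-2<0$ at $z=1$. The reason is that where $u>0$ and $h\ne 0$,
\[
\partial_{z\bar z}\bigl(u^a e^{b\log|h|}\bigr)=u^{a-2}e^{b\log|h|}\Bigl[a(a-1)|u_z|^2+2ab\,u\,\Re\bigl(u_z\overline{v_z}\bigr)+b^2u^2|v_z|^2\Bigr],\qquad v=\log|h|,
\]
and the quadratic form $a(a-1)X^2-2abXY+b^2Y^2$ has positive discriminant $4ab^2$, so the cross term can dominate. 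None of the softer tools you invoke (sub-mean-value for $|u|$, H\"older, Jensen for $\log|h|$) can close this, because a product of subharmonic functions is simply not subharmonic in general; your own hedging in step (3) acknowledges this without resolving it.

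What actually saves the lemma is the special relationship between the two factors, which your decomposition discards: here $g(z)=e^{i(\alpha+s)}(z-e^{is})^{n-1}$ is a constant times the $(n-1)$st power of the \emph{same} linear function $h(z)=z-e^{is}$, so after the affine substitution $w=e^{i\frac{\alpha+s}{n-1}}z-e^{is-i\frac{\alpha+s}{n-1}}$ the function becomes $|\Re(w^{n-1})|^q|w|^{2q-2}$, homogeneous in $w$ and with the zero of $h$ sitting at the center of symmetry of the zero set of $\Re g$. The paper then computes $g_{w\bar w}$ directly on the set $\Re(w^{n-1})\ne 0$ and finds it equals a nonnegative prefactor times
\[
(-2+q+n^2q)|w|^{2n+2}-2(-1+nq)\,\Re\bigl(\bar w^2w^{2n}\bigr)\;\ge\; q(n-1)^2|w|^{2n+2}\;\ge\;0,
\]
i.e.\ the dangerous cross term is exactly absorbed because both factors are powers of the same $w$; continuity plus nonnegativity handles the set where the function vanishes. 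Your proposal never performs (or replaces) this computation, and the last paragraph misreads the role of the lemma: the maximum-principle/boundary-reduction step you describe is the content of the \emph{next} lemma, for which the present subharmonicity statement is the prerequisite. As written, the proposal does not prove the statement.
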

\begin{proof}
We refer to \cite[Chapter~4]{lib} and
\cite[Chapter~I~\S~6]{gar} for some basic properties of subharmonic
functions.
Recall that a continuous function $g$ defined on a region $G\subset
\mathbf  C$ is subharmonic if for all $w_0 \in G$ there exists
$\varepsilon
> 0$ such that \begin{equation}\label{qwe}g(w_0) \le \frac{1}{2\pi}\int_0^{2\pi}  g(w_0 + re^{it} ) dt, \ \ 0 < r
<\varepsilon.\end{equation}  If $g(w_0)=0$,  since $g$ is non-negative, then
\eqref{qwe} holds. If $g(w_0)> 0$, then there exists a neighborhood
$U$ of $w_0$ such that $g$ is of class $C^2 (U)$ and $g(w)>0$ $w\in
U$. Thus if $g$ is $C^2$ where it is positive, then it is enough to check that the Laplacian is non-negative there.

Let $w=e^{i\frac{\alpha+s}{n-1}} z-e^{is-i\frac{(\alpha+s)}{n-1}}$
and define
$$g(w):=f_\alpha(z,e^{is})=|\Re[w^{n-1}]|^q{|w|^{2q-2}}.$$
Assume that $\Re(w^{n-1})>0$. Then $$g_w=2^{-q} \bar w (\bar w
w)^{q-4} (\bar w^{n-1} +
   w^{n-1})^{q-1} [(-1 + q) \bar w^n w + (-1 + n q) \bar w w^n].$$
Further \[\begin{split}g_{w\bar w}&=
   \frac{(q-1) (\bar w w)^{q-4} (\bar w^{n-1}+w^{n-1})^{q-2}}{2^{q+2}}\\&\times
   ((-1+n q) \bar w^{2 n} w^2+(-1+n q) \bar w^2 w^{2 n}+(-2+q+n^2 q) \bar w^{1+n} w^{1+n})
   .\end{split}\]
Observe next that $$\frac{(q-1) (\bar w w)^{q-4} (\bar
w^{n-1}+w^{n-1})^{q-2}}{2^{q+2}}=\frac{ (q-1) |w|^{2q-8} (\Re
w^{n-1})^{q-2}}{16}\ge 0$$ and \[\begin{split}(-1&+n q) \bar w^{2 n}
w^2+(-1+n q) \bar w^2 w^{2 n}+(-2+q+n^2 q) \bar w^{1+n}
w^{1+n}\\&=(-2+q+n^2 q)|w|^{2n+2}-2(-1+nq)\Re[\bar w^2 w^{2n}]\\&\ge
q(n-1)^2|w|^{2n +2}\ge 0.\end{split}\] Similarly we treat the case
$\Re(w^{n-1})<0$. Therefore $\Delta g=4g_{w\bar w}\ge 0$ for
$\Re(w^{n-1})\neq 0$. This implies that $g$ is subharmonic in the
whole of $\mathbf C$. Since $$f_\alpha(z, e^{is})
)=g(e^{i\frac{\alpha+s}{n-1}} z-e^{is-i\frac{(\alpha+s)}{n-1}}),$$
we have that $\Delta f_\alpha(z, e^{is})=\Delta g(az+b)$ which
implies that $z\to f_\alpha(z, e^{is})$ is subharmonic.
\end{proof}

\begin{lemma}\label{mm} For $\alpha\in[0,\pi]$ we have \[\begin{split}I_\alpha(r)&\le  \left(1-r^2\right)^{1-q-n
q}\max_{0\le t\le 2\pi} \int_0^{2\pi} f_\alpha(e^{it},e^{is}) ds\\&=
\left(1-r^2\right)^{1-q-n q}\int_0^{2\pi} f_\beta(1,e^{is})
ds,\end{split}\] for some $\beta$ possibly different from $\alpha$.
\end{lemma}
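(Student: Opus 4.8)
The plan is to combine the two lemmas just established with the classical maximum principle for subharmonic functions, and then to identify the resulting boundary maximum as a value of the form $\int_0^{2\pi}f_\beta(1,e^{is})\,ds$ by means of a rotation of the integration variable $s$.

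First I would pass from the subharmonicity of each $z\mapsto f_\alpha(z,e^{is})$ to that of the average
\[\Phi_\alpha(z):=\int_0^{2\pi}f_\alpha(z,e^{is})\,ds.\]
By the preceding lemma this function is, for every fixed $s$, subharmonic on $\mathbf C$; moreover $f_\alpha$ is jointly continuous on $\mathbf C\times[0,2\pi]$, since $q\ge1$ makes the exponent $2q-2$ non-negative so that no boundary singularity occurs. Hence $\Phi_\alpha$ is continuous, and integrating the sub-mean-value inequality \eqref{qwe} (applied to $g=f_\alpha(\cdot,e^{is})$) over $s$ and invoking Fubini gives $\Phi_\alpha(w_0)\le\frac{1}{2\pi}\int_0^{2\pi}\Phi_\alpha(w_0+\rho e^{i\theta})\,d\theta$ for small $\rho$; thus $\Phi_\alpha$ is subharmonic on $\mathbf C$. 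The maximum principle on the compact disk $\overline{\mathbf U}$ then yields $\Phi_\alpha(\zeta)\le\max_{0\le t\le2\pi}\Phi_\alpha(e^{it})$ for every $\zeta$ with $|\zeta|<1$. Since the computation preceding the lemma shows $I_\alpha(r)=(1-r^2)^{1-q-nq}\Phi_\alpha(r)$ with $r\in[0,1)$ an interior point, multiplying through by the positive factor $(1-r^2)^{1-q-nq}$ gives the first inequality of the lemma.

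To identify $\max_{0\le t\le2\pi}\Phi_\alpha(e^{it})$ I would perform the substitution $s=\sigma+t$, which leaves the limits of $\int_0^{2\pi}f_\alpha(e^{it},e^{is})\,ds$ unchanged by $2\pi$-periodicity. Using $e^{it}-e^{i(\sigma+t)}=e^{it}(1-e^{i\sigma})$, the factor $e^{i(n-1)t}$ produced by $(e^{it}-e^{i(\sigma+t)})^{n-1}$ combines with $e^{i(\alpha+s)}=e^{i(\alpha+\sigma+t)}$ to give $e^{i(\alpha+nt+\sigma)}(1-e^{i\sigma})^{n-1}$, while $|e^{it}-e^{i(\sigma+t)}|^{2q-2}=|1-e^{i\sigma}|^{2q-2}$; comparing with the definition of $f_{\alpha'}(1,e^{i\sigma})$ shows $f_\alpha(e^{it},e^{i(\sigma+t)})=f_{\alpha+nt}(1,e^{i\sigma})$, whence $\Phi_\alpha(e^{it})=\Phi_{\alpha+nt}(1)$. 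As $\beta\mapsto\Phi_\beta(1)$ is continuous and $2\pi$-periodic, the maximum of $\Phi_{\alpha+nt}(1)$ over $t\in[0,2\pi]$ is attained at some $t_0$, and putting $\beta:=\alpha+nt_0$ (which may be reduced mod $\pi$ if desired, since $f_{\beta+\pi}(1,\cdot)=f_\beta(1,\cdot)$) gives $\max_{0\le t\le2\pi}\Phi_\alpha(e^{it})=\int_0^{2\pi}f_\beta(1,e^{is})\,ds$, the asserted equality. One also checks at once that $I_\alpha=I_{\alpha+\pi}$, so restricting $\alpha$ to $[0,\pi]$ entails no loss.

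I expect the only genuinely delicate point to be this passage from ``subharmonic for each $s$'' to ``the average is subharmonic'': it relies on the joint continuity and local boundedness of $f_\alpha$ so that Fubini legitimately transfers the sub-mean-value property, which is precisely where the hypothesis $q\ge1$ enters. Everything else — the maximum principle and the elementary bookkeeping in the rotation — is routine, the substantive work having already been carried out in the preceding subharmonicity lemma.
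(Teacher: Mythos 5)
Your proof is correct and follows essentially the same route as the paper: subharmonicity of the integral mean $\Phi_\alpha$, the maximum principle on $\overline{\mathbf U}$, and the rotation $s\mapsto s-t$ giving $f_\alpha(e^{it},e^{is})=f_{\alpha+nt}(1,e^{i(s-t)})$. The only difference is that you spell out the Fubini justification for the subharmonicity of the average, which the paper simply asserts.
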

\begin{proof}
Since $z\to f_\alpha(z,e^{it})$ is subharmonic
in $|z|< 1$, and $(t,z)\to f_\alpha(z,e^{it})$ is continuous in $[0,\pi]\times\mathbf{U}$, then the integral mean $I(z)=\int_0^{2\pi}
f_\alpha(z,e^{is})ds$ is a subharmonic function in $|z|< 1$.
Therefore
$$\int_0^{2\pi} f_\alpha(z,e^{is}) ds\le \max_t\int_0^{2\pi}
f_\alpha(e^{it},e^{is}) ds.$$ Since
\[\begin{split}f_\alpha(e^{it},e^{is})&=|\Re[e^{i(\alpha+s)}(e^{it}-e^{is})^{n-1}]|^q{|e^{it}-e^{is}|^{2q-2}}\\&=
|\Re[e^{i(\beta+u)}(1-e^{iu})^{n-1}]|^q{|1-e^{iu}|^{2q-2}},\end{split}\]
for $u=s-t$ and $\beta=\alpha+nt$ we obtain the second statement of
the lemma.
\end{proof}

\begin{proof}[Proof of Theorem~\ref{condi}]
As above, we have
\[\begin{split}|\Re[e^{i(\alpha+s)}&(e^{it}-e^{is})^{n-1}]|^q\\&=
2^{(n-1)q/2} \left|\cos[\beta-\frac{\pi}{2}(n-1) +u
\frac{n+1}{2}]\right|^q \left(1-\cos
u\right)^{(n-1)q/2}\end{split}\] and
$${|1-e^{iu}|^{2q-2}}=2^{q-1}\left(1-\cos u\right)^{q-1}.$$ In view of Lemma~\ref{mm} we have
\[\begin{split}I_\alpha(r)
 =2^{(n+1)q/2-1} (1 - r^2)^{1 - (1 + n)
q}F_q(\beta),\end{split}\] where
\begin{equation}\label{dolar}F_q(\beta)=\int_0^{2\pi }(1-\cos u
)^{\frac{(n+1)q-2}{2}}\abs{\cos[u\frac{n+1}{2} +\beta -
\frac{\pi}{2}(n-1)]}^qdu.\end{equation} Moreover
$$F_q(\beta)=2^{(n+1)q/2}\int_0^{\pi
}|\phi_\beta(v)|^q dv $$ where \begin{equation}\label{fb}\phi_\beta(v)=\sin^{{(n+1)-2/q}}
v\cos[v(n+1) +\beta - \frac{\pi}{2}(n-1)].\end{equation} It can be proved
easily that $F_q(\beta)=F_q(\pi-\beta)$. The last fact
implies that it is enough to find the maximum in $[0,\pi/2]$.
\end{proof}

\section{The case $q=1$ and the proof of Theorem~\ref{tete}}
We divide the proof into two cases and use the notation $F=F_q$.
\subsection{The odd $n$.}  For $n=2m-1$ and $q=1$
we have \begin{equation}\label{lm}F(\beta)= 2^{m}\int_0^{\pi
}\sin^{n-1} v \abs{\cos[(n+1)v +\beta]}dv.
\end{equation}Then
$$\frac{d}{dx}\frac{\cos(\beta + n x) \sin^{n}
x}{n}=(-1)^{m-1}\phi_\beta(x)=\sin^{n-1} x\cos[\beta+(n+1)x ].$$ Since  $F$ is
$\pi-$periodic we can assume that $-\pi/2\le \beta\le \pi/2$. Assume
that $0\le \beta<{\pi}/{2}$ (the second case can be treated
similarly). Then $\cos[\beta+2mx ]\ge 0$ and $0\le x\le\pi$ if and only if one of the
following relations hold
\begin{itemize}
    \item $\beta\le \beta+2mx<\frac{\pi}{2}$
    \item $-\frac{\pi}{2}+2k\pi <\beta+2mx< \frac{\pi}{2}+2k\pi$, \
\ for $1\le k\le m-1$ or
    \item $-\frac{\pi}{2}+2m\pi <\beta+2mx< \beta+2m\pi$
\end{itemize}
 or, what is the same, if:
\begin{itemize}
    \item $a_0=0< x<b_0=\frac{{\pi}-2\beta}{4m}$
    \item $a_k:=\frac{-\pi +4 k
\pi -2 \beta }{4 m}<x<b_k:=\frac{\pi +4 k \pi -2 \beta }{4 m}$, \ \
for $1\le k\le m-1$ or
    \item $a_m:=\frac{-\pi +4 m \pi -2 \beta }{4
m}<x<b_m:=\pi .$
\end{itemize}
From \eqref{lm} for
$$g_n(x)=\frac{\cos(\beta + n x) \sin^{n} x}{n},$$
because $g_n(\pi)-g_n(0)=0$, we have
\[\begin{split}F(\beta)&=2^{m}\int_0^{\pi }|\phi_\beta(v)|dv
\\&=2\cdot 2^{m}\int_{0\le v\le \pi :\phi_\beta(v)\ge
0}\phi_\beta(v)dv -2^{m}\int_0^\pi \phi_\beta(v)dv\\&=
2^{m+1}\int_{0\le v\le \pi :\phi_\beta(v)\ge 0}\phi_\beta(v)dv.
\end{split}\]  Therefore
$$\frac{F(\beta)}{{2^{m+1}}}=\sum_{k=0}^{m}
[g_n(b_k)-g_n(a_k)]=g_n(b_0)-g_n(a_m)+\sum_{k=1}^{m-1}
[g_n(b_k)-g_n(a_k)].$$ But for $1\le k\le m$ $$g_n(b_k)=\frac{
\sin^{2m} b_k}{n} \ \ \text{and}\ \ \ g_n(a_k)=-\frac{ \sin^{2m}
a_k}{n}.$$ Therefore
\[\begin{split}F(\beta)=\frac{2^{m+1}}{n}\sum _{k=1}^{2m} \sin^{2m}\left[\frac{-2\beta+(2k-1) \pi }{4
m}\right]=\frac{2^{m+1}}{n}\sum _{k=1}^{2m}
\sin^{2m}\left[\frac{\gamma+k \pi }{2 m}\right],\end{split}\] where
$\gamma=-\pi/2-\beta$. Now by invoking \cite[Lemma~3.5]{mary}, we
have
\begin{equation}\label{fn}f(\beta):=\sum _{k=1}^{2m}
\sin^{2m}\left[\frac{\gamma+k \pi }{2
m}\right]=\frac{2}{B(\frac{1}{2},m)},\end{equation} and therefore
$$F(\beta)=\frac{4m}{n2^{m}}\binom{2m}{m}.$$
\subsection{The even $n$} For $n=2m$ and $q=1$
\begin{equation}\label{im} F(\beta)= 2^{(2m+1)/2}\int_0^{\pi }\sin^{{n-1}} v\abs{\sin[v(n+1)
+\beta]}dv.
\end{equation} Let $$g(x)=\frac{\sin(\beta + n x) \sin^{n}
x}{n}.$$ Since $$\frac{d}{dx}g(x)=(-1)^{m-1}\phi_\beta(x)=\sin^{{n-1}}
x\sin[\beta+(n+1)x ],$$ from \eqref{im} and
\[\begin{split}F(\beta)&=2^{(n+1)/2}\int_0^{\pi }|\phi_\beta(v)|dv
\\&=2\cdot 2^{(n+1)/2}\int_{0\le v\le \pi :\phi_\beta(v)\ge
0}\phi_\beta(v)dv -2^{(n+1)/2}\int_0^\pi \phi_\beta(v)dv\\&=
2^{(n+1)/2+1}\int_{0\le v\le \pi :\phi_\beta(v)\ge
0}\phi_\beta(v)dv,
\end{split}\] we obtain
\[\begin{split}\frac{F(\beta)}{2^{(n+1)/2+1}}&=g\left(\frac{-\beta+\pi }{1+n}\right)+\sum
_{k=1}^m \bigg(g\left(\frac{-\beta+(1+2 k) \pi
}{1+n}\right)-g\left(\frac{-\beta+2 k \pi
}{1+n}\right)\bigg).\end{split}\]
%
%
%
%
%
%
%
%
%
%
After some elementary transformations we obtain
\begin{equation}\label{split}\begin{split}2^{-(n+1)/2}mF(\beta)&=\sum _{k=1}^{n+1}
\sin^{1+n}\left[\frac{-\beta+k \pi
}{1+n}\right].\end{split}\end{equation} This finishes the proof of
Theorem~\ref{tete}.
\section{Appendix}
In this section we include a possible strategy how to determine the
maximum of the function $F$ in $[0,\pi/2]$ provided that $n=2m$ is
an even integer. First of all
$$2^{-(2m+1)/2} mF'(\beta)=-\sum_{k=1}^{1+2 m} \cos\left[\frac{-\beta+k \pi }{1+2
m}\right] \sin^{2 m}\left[\frac{-\beta+k \pi }{1+2 m}\right].$$ Let
$$h_k(\beta)=\cos\left[\frac{-\beta+k \pi }{1+2
m}\right] \sin^{2 m}\left[\frac{-\beta+k \pi }{1+2 m}\right].$$ Then
for $1\le k\le 2m$, $h_k(0)+h_{2m+1-k}(0)=0$, $h_{2m+1}(0)=0$  and
$h_k(\pi/2)+h_{2m+2-k}(\pi/2)=0$ and $h_{m+1}(\pi/2)=0$. It follows
that
$$F'(0)=F'\left(\frac{\pi}{2}\right)=0.$$ Thus $0$ and $\pi/2$ are
stationary points of $F$.


It can be shown that for $\gamma=\beta +\pi/2$ and for  $\pi/2\le
\gamma\le \pi$

\begin{equation}\label{salt}\frac{mF(\beta)}{2^{(n+1)/2}}= \sum _{j=0}^{ m}
\frac{(-1)^{j}}{2^{n}}\binom{1+n}{m-j}\frac{ \cos\left[\frac{ (1+2j)
\gamma}{1+n}\right]}{\sin\left[\frac{(1+2 j) \pi }{2
(1+n)}\right]}+2 \sin^{1+n}\left[\frac{\gamma-\pi/2
}{1+n}\right].\end{equation} We expect that the formula \eqref{salt}
can be more useful than \eqref{split} in finding the maximum of the
function $F(\beta)$, however it seems that the corresponding problem
is hard. By using the software "Mathematica 8"  we can see that
$F(0)<F(\beta)< F(\pi/2)$ provided that $n=4k$ and $0<\beta<\pi/2$
and $F(\pi/2)<F(\beta)< F(0)$ provided that $n=4k+2$ and
$0<\beta<\pi/2$ (cf. Conjecture~\ref{cone}). We do not have a proof of
the previous fact but we include in this paper the following special
cases.

\subsection{The case $m=1$ ($n=2$) and $q=1$}
We have $$F(\beta)=  \frac{\sqrt{2}}{2}\left(3 \sqrt 3
\cos\frac{\beta}{3} + 4 \sin^3\frac{\beta}{3}\right)$$ and
$$F'(\beta)=-\frac{ \sin\frac{\beta}{2} (\sqrt 3 - 2 \sin
\frac{2\beta}{3})}{ 4}.$$ Thus $F'(\beta)=0$ if and only if
$\beta=0$ or $\beta=\frac{\pi}{2}$. The minimum of $F(\beta)$ is
$F(\frac{\pi}{2})=\frac{5\sqrt{2}}{2}$ and the maximum is
$F(0)=\frac{3\sqrt{6}}{2}$.
\subsection{The case $m=2$ ($n=4$) and $q=1$} In this case
\[\begin{split}F(\beta)&=\frac{\sqrt{2}}{8}\left(10 \sqrt{5 + 2 \sqrt{5}} \cos[\beta/5] -
 5 \sqrt{5 - 2 \sqrt 5} \cos[3 \beta/5] + 16 \sin[\beta/5]^5\right).\end{split}\] Then it can be proved that $F$ is
increasing in $[0,\pi/2]$ and $$F(0)=5/4 \sqrt{12.5 + \sqrt
5}\approx4.79845<F(\pi/2)=\sqrt{381/32 + 5 \sqrt 5}\approx4.80485.$$

By differentiating the subintegral expression  \eqref{fbeta} w.r.t
$\beta$ we can easily conclude that $\beta=0$ and $\beta=\pi/2$ are
stationary points of $F$ provided that $q\ge 1$ and $n\in \mathbf
N$. This and some experiments with the software "Mathematica 8"
leads to the following conjecture
\begin{conjecture}\label{cone}
Denote by $[a]$ the integer part of $a$. We conjecture that:
\begin{itemize}
            \item $F_q$ is decreasing on $[0,\frac{\pi}{2}]$ for $q>2$
    \item $F_q$ is nondecreasing (nonincreasing) on $[0,\frac{\pi}{2}]$ for $q\le 2$ and
$\left[\frac{(n+1)q}{2}\right]$ is an even (odd) integer.
\end{itemize}
\end{conjecture}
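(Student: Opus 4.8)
\emph{Strategy.} The plan is to expand $F_q$ into a Fourier cosine series in $\beta$ and to read off the sign of $F_q'$ from its coefficients. Put $N=n+1$, $\mu=Nq-2$ and $\Theta=\Theta(v,\beta)=Nv+\beta-\tfrac\pi2(n-1)$, so that by \eqref{fbeta} $F_q(\beta)$ equals a fixed positive constant times $\int_0^\pi(\sin v)^\mu\,|\cos\Theta|^q\,dv$. Expand $|\cos\Theta|^q$ in its period-$\pi$ Fourier cosine series $\lambda_0+\sum_{k\ge1}\lambda_k\cos(2k\Theta)$, with $\lambda_k=\tfrac{2\Gamma(q+1)}{2^q\Gamma(1+q/2+k)\Gamma(1+q/2-k)}$ (classical Cauchy--beta values), interchange sum and integral, and write $\cos(2k\Theta)=\cos(2kNv)\cos(2k\beta-k\pi(n-1))-\sin(2kNv)\sin(2k\beta-k\pi(n-1))$. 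The terms carrying $\sin(2kNv)$ drop out, because $(\sin v)^\mu$ is symmetric and $\sin(2kNv)$ antisymmetric about $v=\pi/2$; and since $k\pi(n-1)$ is an integer multiple of $\pi$, the surviving phase collapses to $(-1)^{k(n-1)}\cos(2k\beta)$. One obtains
\[
F_q(\beta)=B_0+\sum_{k\ge1}B_k\cos(2k\beta),\qquad B_k=c\,(-1)^{k(n-1)}\lambda_k\gamma_k\quad(c>0),
\]
where $\gamma_k=\int_0^\pi(\sin v)^\mu\cos(2kNv)\,dv$ is again an explicit ratio of Gamma functions. From this series $F_q'(0)=F_q'(\pi/2)=0$ is immediate, recovering the observation in the Appendix.

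\emph{Reduction to a positivity theorem.} The monotonicity of $F_q$ on $[0,\pi/2]$ is the assertion that $F_q'(\beta)=-2\sum_{k\ge1}kB_k\sin(2k\beta)$ keeps one sign on $(0,\pi/2)$. First compute $\operatorname{sgn}B_k$ from the reflection formula $\Gamma(z)^{-1}=\pi^{-1}\sin(\pi z)\Gamma(1-z)$: for $1\le q<2$ with $Nq/2\notin\mathbf Z$ one finds $\operatorname{sgn}\gamma_k=(-1)^{\lfloor Nq/2\rfloor}$ (independent of $k$), $\operatorname{sgn}\lambda_k=(-1)^{k-1}$, hence $\operatorname{sgn}B_k=(-1)^{nk+\lfloor Nq/2\rfloor-1}$. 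Thus $B_k$ has a fixed sign when $n$ is even and strictly alternates when $n$ is odd, so $(-1)^{nk}B_k$ is of one sign throughout. Next turn $F_q'$ into a sine series with monotone coefficients on an argument running through $(0,\pi)$: for $n$ even it already reads $\mp2\sum_k k|B_k|\sin(2k\beta)$ with $2\beta\in(0,\pi)$, and for $n$ odd one uses $(-1)^k\sin(2k\beta)=-\sin\!\big(k(\pi-2\beta)\big)$ with $\pi-2\beta\in(0,\pi)$. Then invoke the Fej\'er--Jackson inequality together with Abel summation: if $a_1\ge a_2\ge\cdots\ge0$ and $ka_k\to0$ then $\sum_k a_k\sin(k\theta)\ge0$ on $[0,\pi]$; apply this with $a_k=k|B_k|$, for which $k\cdot(k|B_k|)=k^2|B_k|\to0$. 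This yields $F_q'\ge0$ or $F_q'\le0$ on $(0,\pi/2)$, the direction being governed by the parity of $nk+\lfloor Nq/2\rfloor$. The degenerate cases $q=2$, and more generally $Nq/2\in\mathbf Z$, are not exceptions: then one of the Gammas in $\gamma_k$ has a pole for all large $k$, so $F_q$ is a trigonometric polynomial (a constant when $q\le2$), and the same argument applies to the finite sum, recovering the known constant cases.

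\emph{The obstacles.} The step I expect to be the main difficulty is verifying the hypothesis of the Fej\'er--Jackson/Abel step, namely that $k\mapsto k^2|B_k|$ is non-increasing for \emph{every} $k\ge1$. Since $|B_k|\asymp k^{-q(n+2)}$ this sequence is eventually decreasing to $0$, and the ratio $\tfrac{(k+1)^2|B_{k+1}|}{k^2|B_k|}$ is an explicit quotient of four Gamma factors that should be $\le1$ by log-convexity of $\Gamma$; the awkward part is the first few values of $k$, for which the asymptotics say nothing, and it may be cleaner to establish the monotonicity of $|B_k|$ itself first and then absorb the polynomial factor. A second, genuinely harder issue is the range $q>2$, where $\lambda_k$ (hence $B_k$) changes sign near $k\approx q/2$ rather than obeying the clean alternating/constant pattern, so the bare Fej\'er argument fails. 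For that range I would instead differentiate \eqref{dolar} under the integral sign and integrate by parts once (valid whenever $(n+1)q>2$), obtaining
\begin{equation}
F_q'(\beta)=-\Big(q-\tfrac{2}{n+1}\Big)2^{(n+1)q/2}\!\int_0^\pi(\sin v)^{(n+1)q-3}\cos v\;\big|\cos\big((n+1)v+\beta-\tfrac\pi2(n-1)\big)\big|^q\,dv,
\end{equation}
then fold $(\pi/2,\pi)$ onto $(0,\pi/2)$ via $v\mapsto\pi-v$, so that the integral becomes $\int_0^{\pi/2}(\sin v)^{(n+1)q-3}\cos v\,\big[\,|\cos((n+1)v+\xi)|^q-|\cos((n+1)v-\xi)|^q\,\big]\,dv$ with $\xi=\beta-\tfrac\pi2(n-1)$, and argue that this last integral has constant sign for $q\ge2$, exploiting the convexity of $t\mapsto|t|^q$ and the positivity of the weight $(\sin v)^{(n+1)q-3}\cos v$ on $(0,\pi/2)$. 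Finally, matching the computed direction against the cases $n=1,2,4$ recorded above indicates that the parity condition in Conjecture~\ref{cone} is correct for even $n$ but, for odd $n$, is to be read in the opposite sense --- i.e. the governing parity should be that of $n+\lfloor(n+1)q/2\rfloor$ --- a point the sign computation above settles either way.
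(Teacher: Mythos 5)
First, note that the paper offers no proof of this statement: it is posed as an open conjecture, supported only by the observation that $0$ and $\pi/2$ are stationary points of $F_q$, two worked examples ($n=2,4$ with $q=1$), numerical experiments, and the heuristic Fourier/asymptotic analysis of Appendix~B for $q=1$. So there is no proof in the paper to compare yours against; the question is whether your proposal closes the gap, and it does not, although it is a serious and largely correct framework. Your setup for $1\le q<2$ checks out: the cosine expansion of $|\cos\Theta|^q$, the vanishing of the $\sin(2kNv)$ terms by symmetry about $v=\pi/2$, the sign computation $\operatorname{sgn}\gamma_k=(-1)^{\lfloor Nq/2\rfloor}$ via the reflection formula, and hence $\operatorname{sgn}B_k=(-1)^{nk+\lfloor Nq/2\rfloor-1}$ are all correct. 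Your observation that the resulting direction of monotonicity for odd $n$ is \emph{opposite} to the parity stated in the conjecture is a genuine catch: for $n=1$ and $1<q<2$ one has $\lfloor(n+1)q/2\rfloor=1$, so the conjecture as written predicts $F_q$ nonincreasing, contradicting the paper's own cited result from \cite{mada} that the maximum is at $\pi/2$; your corrected parity $n+\lfloor(n+1)q/2\rfloor$ is the one consistent with that case.

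The gaps, however, are real and not routine. (i) The positivity lemma as you state it in the main text (``$a_1\ge a_2\ge\cdots\ge0$ and $ka_k\to0$ imply $\sum a_k\sin k\theta\ge0$'') is false --- $\sin\theta+\sin2\theta$ is already negative on part of $(0,\pi)$; what Abel summation against the Fej\'er--Jackson partial sums actually requires is that $ka_k$ be non-increasing, i.e.\ that $k^2|B_k|$ be non-increasing for \emph{all} $k\ge1$. You correctly identify this in your ``obstacles'' paragraph, but it is left unproved, and the asymptotic $|B_k|\asymp k^{-q(n+2)}$ says nothing about the first few terms, which is exactly where such Gamma-ratio monotonicity claims tend to fail; until this is established (uniformly in $n$ and $q$), the $q<2$ case is not proved. (ii) For $q>2$ your integration by parts is correct, but the assertion that $\int_0^{\pi/2}(\sin v)^{(n+1)q-3}\cos v\,\bigl[\,|\cos((n+1)v+\xi)|^q-|\cos((n+1)v-\xi)|^q\,\bigr]\,dv$ has constant sign is precisely the whole difficulty: the bracket oscillates in sign as $v$ runs over $(0,\pi/2)$, and ``convexity of $|t|^q$'' by itself gives no control over the signed cancellation against the positive weight. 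No argument is supplied here, so the first bullet of the conjecture remains entirely open in your write-up. In short: a promising and well-aligned strategy (close in spirit to the paper's Appendix~B), with a valuable correction to the conjecture's statement for odd $n$, but with two substantive unproved steps, so it should not be presented as a proof.
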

\section{Appendix~B}
In this appendix we offer some numerical estimation that confirm tha  our conjecture is true, at least for $q=1$.
Whereas as is observed in \eqref{fn} for $0\le \beta\le \pi$
\begin{equation}\label{ff}f(\beta)=f_n(\beta)=\sum_{k=1}^s \sin^s\frac{-\beta+k
\pi}{s}\end{equation} is the constant $2/B(\frac12,\frac{s}2)$ for
even $s$, if $s=n+1$ is odd the maximum exceeds
$2/B(\frac12,\frac{s}2)$ by a tiny amount that is very nearly
$$\frac{4}{\pi} \phantom. \frac1{s+2} \frac2{s+4} \frac3{s+6} \cdots
\frac{s}{3s} = \frac4\pi s! \frac{s!!}{(3s)!!} = (27+o(1))^{-s/2}$$
for large $s$.
 Here and later we use "$u!!$" only
for positive odd $u$ to mean the product of all odd integers in
$[1,u]$; that is, $u!! := u!/(2^v v!)$ where $u=2v+1$. In order to
outline the proof of the last statement we do as follows.

For $s=2m+1$ we define the function $g$ as follows
$$g(x): = f(x+\frac\pi2) = g(-x) = -g(x+s\pi),$$ which has a finite
Fourier expansion in cosines of odd multiples of $X := x/s$, namely
$$f(x) = (-1)^m 2^{-s} \sum_{j=0}^s (-1)^j {s\choose j} \frac{\cos
\phantom. tX}{\sin \frac{\pi t}{2s}}$$ where $t = s-2j$. We deduce
from \eqref{ff} that
$$ f(\beta)-f(\beta+\pi) = 2\phantom.\sin^s (\beta/s),$$ from which
it follows that $g(x)$ is maximized somewhere in $|x| \leq \pi/2$,
but that changing the optimal $x$ by a small integral multiple of
$\pi$ reduces $g$ by a tiny amount; this explains the near-maxima we
observed at $x=\pm\pi$ for $2|m$, and indeed the further
oscillations for both odd and even $m$ that we later noticed as $s$
grows further.

This also suggests that in and near the interval $|x| \leq \pi/2$
our function $g$ should be very nearly approximated for large s by
an even periodic function $\tilde g(x)$ of period $\pi$. We next
outline the derivation of such an approximation, with $\tilde g$
having an explicit cosine-Fourier expansion $$\tilde g(x) = g_0 +
g_1 \cos 2x + g_2 \cos 4x + g_3 \cos 6x + \cdots$$ where $g_0 =
2/B(\frac12,\frac{s}2)$ and, for $l>0$, $$g_l = (-1)^{m+l-1}
\frac4\pi \frac{s!}{2l+1} \frac{((2l-1)s)!!}{((2l+1)s)!!}$$ with the
double-factorial notation defined as above. Thus $$\tilde g(x) = g_0
+ (-1)^m \frac{4s!}\pi \left(\frac{s!!}{(3s)!!} \cos 2x - \frac13
\frac{(3s)!!}{(5s)!!} \cos 4x + \frac15 \frac{(5s)!!}{(7s)!!} \cos
6x \mp \cdots \right).$$ For large $s$, this is maximized at $x=0$
or $x=\pm\pi/2$ according as $m$ is even or odd. Since we already
know by symmetry arguments that $g'(0) = g'(\pm \pi/2) = 0$, this
point or points will also be where g is maximized, once it is
checked that $g - \tilde g$ and its first two derivatives are even
tinier there.

The key to all this is the partial-fraction expansion of the factor
$1 / \sin (\pi t /2s)$ in the Fourier series of $g$, obtained by
substituting $\theta = \pi t / 2s$ into $$\frac1{\sin \pi\theta} =
\frac1\pi \sum_{l=-\infty}^\infty \frac{(-1)^l}{\theta-l}$$ with the
conditionally convergent sum interpreted as a principal value or
Ces\'aro limit etc. On the other hand the main term, for $l=0$,
yields the convolution of $\cos^s (x/s)$ with a symmetrical square
wave, which is thus maximized at $x=0$ and almost constant near
$x=0$; we identify the constant with $2/B(\frac12,\frac{s}2)$ using
the known product formula for
$$\int_{-\pi/2}^{\pi/2} \cos^s X \phantom. dX.$$ The new observation is
that each of the error terms $(-1)^l/(\theta-l)$ likewise yields the
convolution with a square wave of $$(-1)^l \cos(2lx) \phantom.
\cos^s(x/s).$$ If we approximate this square wave with a constant,
we get the formula for $g_l$ displayed above, via the formula for
the $s$-th finite difference of a function $1/(j_0-j)$. The error in
this approximation is still tiny (albeit not necessarily negative)
because $\cos^s (x/s)$ is minuscule when $x$ is within $\pi/2$ of
the square wave's jump at $\pm \pi s / 2$.

We've checked these approximations numerically to high precision
(modern computers and gp make this easy) for $s$ as large as $100$
or so, in both of the odd congruence classes mod $4$, and it all
works as expected; for example, when $s=99$ we have $f(0) - g_0 =
2.57990478176660\ldots \cdot 10^{-70},$ which almost exactly matches
the main term $g_1 = (4/\pi) \phantom. 99! \phantom. 99!!/297!!$ but
exceeds it by $5.9110495\ldots \cdot 10^{-102},$ which is almost
exactly $g_2 = (4/\pi) \phantom. 99! \phantom. 297!!/(3 \cdot
495!!)$ but too large by $7.92129\ldots \cdot 10^{-120}$, which is
almost exactly $g_3 = (4/\pi) \phantom. 99! \phantom. 495!!/(5 \cdot
693!!),$ etc.; and likewise for $s=101$ except that the maximum
occurs at $\beta = \pi/2$ and is approximated by an alternating sum
$g_1 - g_2 + g_3 \ldots$ (actually here this approximation is exact
because $x=0$).

\subsection*{Acknowledgement} We are  thankful to the referee for some corrections and comments, that have improved this paper.

\end{document}